\def\R{{\mathbb R}}
\def\N{{\mathbb N}}
\def\le{\leqslant}
\def\ge{\geqslant}
\def\d{{\partial}}
\newcommand{\conj}[1]{\overline{#1}}
\newcommand{\pare}[1]{\left(#1\right)}
\global\let\AddToReset=\@addtoreset}
\newcommand{\re}{\mathrm{Re}}
\newcommand{\im}{\mathrm{Im}}
\DeclareMathOperator{\diver}{div}
\theoremstyle{plain}
\newtheorem{theorem}{Theorem}[section]
\newtheorem{lemma}[theorem]{Lemma}
\newtheorem{corollary}[theorem]{Corollary}
\theoremstyle{definition}
\newtheorem{assumption}[theorem]{Assumption}
\newtheorem{remark}[theorem]{Remark}
\newtheorem*{remark*}{Remark}
\begin{document}

\title[Cauchy Problem for NLs with rotation term]
{On the Cauchy Problem for nonlinear Schr\"odinger equations with rotation}
\author[P.\ Antonelli]{Paolo Antonelli}
\address[P.\ Antonelli]{Department of Applied Mathematics and Theoretical
Physics\\
CMS, Wilberforce Road\\ Cambridge CB3 0WA\\ England}
\email{p.antonelli@damtp.cam.ac.uk}
\author[D.\ Marahrens]{Daniel Marahrens}
\address[D.\ Marahrens]{Department of Applied Mathematics and Theoretical
Physics\\
CMS, Wilberforce Road\\ Cambridge CB3 0WA\\ England}
\email{D.O.J.Marahrens@damtp.cam.ac.uk}
\author[C.\ Sparber]{Christof Sparber}
\address[C.\ Sparber]{Department of Mathematics, Statistics, and Computer Science\\
University of Illinois at Chicago\\
322 Science and Engineering Offices (M/C 249)\\
851 South Morgan Street\\
Chicago, Illinois 60607\\ United States}
\email{sparber@uic.edu}
\begin{abstract}
We consider the Cauchy problem for (energy-subcritical) nonlinear Schr\"odinger equations with sub-quadratic external potentials and an additional angular momentum rotation term. This equation is a well-known model for 
superfluid quantum gases in rotating traps. We prove global existence (in the energy space) for defocusing nonlinearities without 
any restriction on the rotation frequency, generalizing earlier results given in \cite{hai, HHL}. Moreover, we find that the rotation term has a considerable influence in proving finite time blow-up in the focusing case. 
\end{abstract}

\date{\today}

\subjclass[2000]{35Q55, 35A01}
\keywords{Nonlinear Schr\"odinger equation, Bose-Einstein condensation, rotation, angular momentum operator, finite time blow-up}

\thanks{This publication is based on work supported by Award No. KUK-I1-007-43,
funded by the King Abdullah University of Science and Technology (KAUST). C.\ S.\ acknowledges support by the Royal society through his University research fellowship. D.\ M.\ acknowledges support by the Cambridge European Trust and the EPSRC}
\maketitle

\section{Introduction}
Ever since the realization of Bose-Einstein condensation (BEC) in dilute atomic gases, much attention has been given to dynamical phenomena associated to its superfluid nature.
One remarkable feature of a superfluid is the appearance of quantized vortices, cf. \cite{Af} for a broad introduction to these kind of phenomena. In physical experiments, the BEC is thereby set 
into rotation by a stirring potential, which is usually induced by a laser \cite{MCWD, MCBD, MAHHWC, SBBHD} (see also \cite{BaoDuZh} for numerical simulations). 
The corresponding mathematical model is a nonlinear Schr\"odinger equation (NLS) 
with angular momentum rotation term, i.e.
\begin{equation}\label{eq:nls_rotat}
i \hbar \d_t\psi=-\frac{\hbar^2}{2}\Delta\psi+ \lambda |\psi|^{2}\psi+V(x)\psi-\Omega\cdot L\psi,\quad (t, x)\in\R\times\R^3,
\end{equation}
where $\psi=\psi(t, x)$ is the complex-valued wave function of the condensate. In the physics literature, \eqref{eq:nls_rotat} is known as the \emph{Gross-Pitaevskii equation} for rotating Bose gases. 
The coupling constant $\lambda \in \R $ 
can be experimentally tuned to account for both \emph{defocusing} ($\lambda>0$) and \emph{focusing} ($\lambda<0$) nonlinearities. 
The potential $V(x)$ describes the 
magnetic trap and is usually assumed to be of the form
\begin{equation}
\label{eq:Vquadr}
V(x)=\frac{1}{2}\sum_{j=1}^3 \gamma_j^2 x_j^2, \quad \gamma_j \in \R.
\end{equation}
Finally, $\Omega\cdot L$ denotes rotation term, where
\begin{equation}
\label{eq:angular_momentum}
L:=-ix\wedge\nabla
\end{equation}
is the quantum mechanical \emph{angular momentum operator} and $\Omega\in\R^3$ is a given \emph{angular velocity vector}. For a rigorous derivation of \eqref{eq:nls_rotat} 
in the stationary case, we refer to \cite{LS}. Furthermore, we remark that the appearance of quantized vortices has been rigorously proved in \cite{seir}, 
by means of a spontaneous symmetry breaking for the ground state of the stationary equations 
(provided $\lambda>0$ is sufficiently big). For further mathematical results in this direction we refer to \cite{Af} and the references given therein.

The aforementioned works illustrate the fact that there is a considerable amount of mathematical studies devoted the stationary equation. On the other hand, the 
time-dependent equation \eqref{eq:nls_rotat}, has not been given as much attention, even though it is considered to provide the basis for a 
dynamical description of vortex creation. Indeed, except for numerical simulations \cite{BaoDuZh} , we are only aware of \cite{hai, HHL} providing rigorous results for \eqref{eq:nls_rotat}. 
In \cite{HHL} global well-posedness of the Cauchy problem 
(in the energy space) is proved in the case where $\lambda>0$, $V(x)=\frac{\gamma^2}{2}|x|^2$, i.e. an isotropic confinement, and $|\Omega|=\gamma$. The analogous result for in $d=2$ spatial dimensions is given in \cite{hai}.

\begin{remark} Let us mention that in \cite{Liu, LiuSp}, the NLS model \eqref{eq:nls_rotat} is also rigorously studied. The results, however, mainly concern an asymptotic regime, the so-called \emph{semi-classical limit}, and 
are thus very different from the present work.
\end{remark}

In view of these results the main goal of our work is twofold: First, we shall prove global well-posedness of \eqref{eq:nls_rotat} in the defocusing case, 
without any restriction on $|\Omega|$ or $\{\gamma_j\}_{j=1}^3$. The latter is needed to describe actual physical experiments, which often require 
$|\Omega|\not =\gamma$. To this end, we shall show that by a suitable time-dependent change of coordinates, we can transform equation \eqref{eq:nls_rotat} into a nonlinear 
Schr\"odinger equation \emph{without} rotation term 
but with a \emph{time-dependent trapping potential}. In a second step, we shall analyze the possibility of finite time blow-up of solutions, in the case of a focusing nonlinearity.  
Recall that finite time blow-up means, that there is a $T^* < +\infty$, such that
\[
\lim_{t \to T^*} \| \nabla \psi (t) \|_{L^2} = +\infty.
\]
As we shall see, the usual proof of finite time blow-up, based on the classical virial argument of Glassey (see e.g.\ \cite{caze}), in general does not go through in a straight-forward way, due to the influence of the rotation term. 
Instead, it has to be slightly modified, yielding blow-up conditions which depend on $|\Omega|$, and which coincide with the usual conditions in the limit $|\Omega|\rightarrow 0$.

\section{Mathematical Setting and Main Result}
\label{sec:intro}

In the following we shall consider the Cauchy problem for the following, slightly more general NLS type model
\begin{equation}
\label{eq:NLS_rot}
i \partial_t \psi = -\frac{1}{2} \Delta \psi + \lambda |\psi|^{2\sigma} \psi + V(x) \psi - \Omega \cdot L \psi,\quad 
\psi(0)=\psi_0(x),
\end{equation}
where $x\in \R^d$, for $d= 2$ or $d=3$, respectively, and $\sigma < \frac{2}{d-2}$, i.e. the nonlinearity is assumed to be \emph{energy-subcritical}.
In $d=2$ the rotation term simply reads
\begin{equation}
\label{eq:2d}
\Omega \cdot L = - i |\Omega| (x_1 \partial_{x_2} - x_2 \partial_{x_1}).
\end{equation}
\begin{remark} In $d=3$ we could, without restriction of generality, choose a reference frame such that $\Omega = (0,0,|\Omega |)^\top$, yielding the same formula as in \eqref{eq:2d}. For the sake of generality
we shall not do so but consider the term $\Omega \cdot L \equiv -i \Omega\cdot (x\wedge \nabla)$ in full generality.
\end{remark}
A potential $V(x) \in \R^d$, satisfying $(\Omega\cdot L) V (x)= 0, \quad \forall x \in \R^d$, is said to be \emph{axially symmetric} (with respect to the rotation axis $\Omega \in \R^3$). 
In particular, this holds in the case of 
an isotropic trap potential, i.e.\ a potential of the form \eqref{eq:Vquadr} with $\gamma_1=\gamma_2=\gamma_3$.

Formally, \eqref{eq:NLS_rot} preserves the total mass
\begin{equation*}
M:=\int_{\R^d}|\psi(t,x)|^2 \, dx,
\end{equation*}
and the energy 
\begin{equation}\label{eq:en}
E_\Omega:=\int_{\R^d}\frac{1}{2}|\nabla\psi|^2+V(x)|\psi|^2+\frac{\lambda}{\sigma+1}|\psi|^{2\sigma+2} - \conj{\psi} (\Omega \cdot L) \psi \, dx.
\end{equation}
Note that, the last term is indeed real valued (as can be seen by a partial integration).
In order for these two quantities to be well defined, we shall study the Cauchy problem corresponding to \eqref{eq:NLS_rot} in the space
\begin{equation*}
\Sigma:=\{f\in H^1(\R^d)\; : \; |x|f\in L^2(\R^d)\},
\end{equation*}
equipped with the norm
\[
\| f \|_{\Sigma}^2:= \| f \|_{L^2}^2 + \| \nabla f \|_{L^2}^2 + \| x f \|_{L^2}^2.
\]
We remark that even if the potential $V(x)$ is chosen to be identically zero, it would not be enough to consider the Cauchy problem for $\psi \in H^1(\R^d)$, since in this case we can no longer guarantee that
\begin{equation}\label{eq:ang}
L_\Omega:=\int_{\R^d} \overline{\psi} (\Omega \cdot L) \psi \,dx < +\infty.
\end{equation}
Physically speaking, this means that $\psi$ has finite angular momentum. The choice of $\Sigma$ is therefore natural in our situation and not necessarily linked to the presence of a harmonic trapping potential, 
in contrast to \cite{Car03,Car05, Car09}.

We can now state the main result of this work.
\begin{theorem}
\label{thm:main}
Let $0<\sigma  < 2/(d-2)$, $\lambda \in \R$, $\Omega \in \R^d$, for $d=2,3$ and denote the smallest trap frequency by $\underline \gamma:= \min \{\gamma_j \}_{j=1}^d$.\\
\emph{(1)} Then, for any given initial data $\psi_0\in \Sigma$, there exists a unique global in-time solution $\psi\in C([0,\infty);\Sigma)$ to \eqref{eq:NLS_rot}, provided one of the following conditions is satisfied: 
\begin{itemize}
\item[(i)]  the nonlinearity is $L^2$-subcritical $\sigma < 2/d$, or 
\item[(ii)] $\sigma \ge 2/d$ and $\lambda \ge 0$, i.e. the nonlinearity is defocusing.
\end{itemize}
\emph{{(2)}} On the other hand, if $\lambda < 0$, and if either:
\begin{itemize}
\item[(i)] $(\Omega\cdot L)V=0$, i.e. $V$ is axially symmetric, and $\sigma \ge 2/d$, or
\item[(ii)] $(\Omega\cdot L)V\neq0$, $|\Omega|\le \underline \gamma$, and $ \sigma\ge \alpha_\Omega/d $, where 
\begin{equation}\label{def:alph}\alpha_\Omega:=\sqrt{\frac{4\underline \gamma^2}{\underline \gamma^2-|\Omega|^2}}.
\end{equation}
\end{itemize}
Then there exist initial data $\psi_0 \in \Sigma$ such that finite time blow-up of the corresponding solution $\psi(t)$ occurs.
\end{theorem}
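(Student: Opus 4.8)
The plan is to prove both parts by reducing \eqref{eq:NLS_rot} to a rotation-free NLS via a time-dependent change of coordinates, as announced in the introduction. Concretely, write $\psi(t,x) = \phi(t, R(t)x)$ where $R(t) \in SO(d)$ is the rotation generated by $\Omega$, i.e.\ $R(t) = e^{t A}$ with $A$ the skew-symmetric matrix representing $x \mapsto \Omega \wedge x$ (in $d=2$ this is rotation by angle $|\Omega| t$). A direct computation shows that the rotation term $-\Omega\cdot L$ is exactly the generator of this rotation acting on the spatial variable, so that in the new frame $\phi$ solves
\begin{equation*}
i\partial_t \phi = -\tfrac12 \Delta \phi + \lambda |\phi|^{2\sigma}\phi + V(R(-t) y)\,\phi,
\end{equation*}
a standard NLS with a \emph{time-dependent}, but still quadratic (hence sub-quadratic, smooth, with bounded second derivatives uniformly in $t$ on compact time intervals) potential $W(t,y) := V(R(-t)y)$. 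Since $R(t)$ is an isometry, the map $\psi \mapsto \phi$ is an isometry of $\Sigma$ for each fixed $t$, and it preserves $H^1$, $L^2$, and the weight $|x|$; thus global existence in $C([0,\infty);\Sigma)$ for \eqref{eq:NLS_rot} is equivalent to the same statement for the transformed equation.

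For part (1), I would then invoke the existing Cauchy theory for energy-subcritical NLS with smooth sub-quadratic time-dependent potentials in the space $\Sigma$ (e.g.\ Carles' results, as referenced in the paper). Local well-posedness in $\Sigma$ follows from Strichartz estimates for the corresponding time-dependent metaplectic propagator together with the algebra/Sobolev structure of the nonlinearity for $\sigma < 2/(d-2)$. For the global statement one uses conservation (or, in the time-dependent-potential case, controlled growth) of mass and of the transformed energy $E(t) = \int \tfrac12|\nabla\phi|^2 + W(t,y)|\phi|^2 + \tfrac{\lambda}{\sigma+1}|\phi|^{2\sigma+2}\,dy$: differentiating in time produces only the term $\int \partial_t W\,|\phi|^2$, which is bounded by $C(T)\|y\phi\|_{L^2}^2 \le C(T)\|\phi\|_\Sigma^2$, while $\|y\phi\|_{L^2}$ itself satisfies a differential inequality closing against $\|\phi\|_\Sigma$. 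In case (i), $\sigma < 2/d$, the nonlinear term is controlled by interpolation (Gagliardo–Nirenberg) regardless of the sign of $\lambda$; in case (ii), $\lambda \ge 0$ makes the nonlinear energy term a good (nonnegative) term. Either way one obtains an a priori bound on $\|\phi(t)\|_\Sigma$ on every finite interval, and the blow-up alternative from local theory yields global existence. Translating back gives $\psi \in C([0,\infty);\Sigma)$.

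For part (2), I would run a virial/Glassey argument. Set $y(t) := \int |x|^2 |\psi(t,x)|^2\,dx$; using \eqref{eq:NLS_rot} one computes $\ddot y(t)$ and finds, besides the usual terms $\int |\nabla\psi|^2 + \tfrac{d\sigma\lambda}{\sigma+1}|\psi|^{2\sigma+2} - 2\nabla V \cdot x\,|\psi|^2$, an extra contribution coming from the rotation term. The key algebraic observation (the ``considerable influence of the rotation term'' mentioned in the introduction) is that this extra contribution can be written in terms of $L_\Omega$ and, after completing the square / using Cauchy–Schwarz to bound the cross term $|\Omega|\,|L_\Omega| \le |\Omega|\,\|x\psi\|_{L^2}\|\nabla\psi\|_{L^2}$, it reduces the effective kinetic coefficient from $1$ to roughly $1 - |\Omega|^2/\underline\gamma^2$ while the harmonic part of $V$ contributes $-\underline\gamma^2 y(t)$ with the right sign. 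In the axially symmetric case (i), $(\Omega\cdot L)V = 0$ so $\nabla V \cdot x$ simplifies and the rotation term essentially drops out of $\ddot y$ (in fact $L_\Omega$ is conserved there), and one recovers the classical condition $\sigma \ge 2/d$. In case (ii), carrying the $|\Omega|$-dependent bookkeeping through gives a modified virial identity of the form $\ddot y(t) \le a\,E_\Omega - b\,\underline\gamma^2 y(t) + (\text{nonlinear surplus})$ with the nonlinear term having a favorable sign precisely when $\sigma \ge \alpha_\Omega/d$, where the exponent $\alpha_\Omega$ in \eqref{def:alph} is exactly what makes the scaling work out after the $|\Omega|^2/\underline\gamma^2$ loss; choosing $\psi_0\in\Sigma$ with sufficiently negative energy (possible since $\lambda<0$, by scaling a fixed profile) forces $y(t)$ to become negative in finite time, contradicting $y \ge 0$ and hence forcing $\|\nabla\psi(t)\|_{L^2}\to\infty$.

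The main obstacle is the virial computation in part (2)(ii): one must carefully track how the non-commuting pieces — the rotation generator $\Omega\cdot L$, the multiplication by $|x|^2$, and the anisotropic potential $V$ — interact in $\ddot y(t)$, and then identify the sharp constant. Getting the cross term $\Omega\cdot L$ versus $\|x\psi\|_{L^2}\|\nabla\psi\|_{L^2}$ absorbed optimally (rather than wastefully) is what produces the precise threshold $\alpha_\Omega$, and it is also where the hypothesis $|\Omega| \le \underline\gamma$ is genuinely used (otherwise the effective kinetic coefficient could vanish or change sign and the argument collapses). A secondary technical point is justifying the virial identity rigorously in $\Sigma$ rather than merely formally, which requires either a regularization/truncation argument or appeal to the persistence of $\Sigma$-regularity established in part (1).
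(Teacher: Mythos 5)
Your proposal follows essentially the same route as the paper: the time-dependent rotation of coordinates reducing \eqref{eq:NLS_rot} to an NLS with a sub-quadratic time-dependent potential for the global existence part, and a Glassey-type virial argument with an optimized Cauchy--Schwarz absorption of the angular momentum for the blow-up part, yielding the same threshold $\alpha_\Omega$. One correction to your account of part (2): when one actually computes $\frac{d^2}{dt^2}I(t)$ for $I(t)=\frac12\|x\psi\|_{L^2}^2$, the two contributions coming from the rotation term cancel each other exactly, so the virial identity is \emph{identical} to the rotation-free one; the rotation does not enter $\ddot I$ at all. The real obstruction is that the quantity appearing naturally in the virial identity is controlled by $E_0$ as in \eqref{eq:tilde_en}, which is conserved only when $(\Omega\cdot L)V=0$; in the non-symmetric case one must \emph{add and subtract} $\alpha L_\Omega$ by hand to reconstitute the conserved energy $E_\Omega$, and it is the leftover $+\alpha L_\Omega$ that is absorbed via $|L_\Omega|\le\|\nabla\psi\|_{L^2}\|x\psi\|_{L^2}$ into the kinetic and potential terms, producing $\alpha_\Omega$ and the requirement $|\Omega|\le\underline\gamma$. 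This also explains why the two cases carry different energy hypotheses ($E_0(0)<0$ versus $E_\Omega(0)<0$). A further cosmetic difference: for the $L^2$-subcritical case (1)(i) the paper works directly with Strichartz estimates for the full propagator (including the rotation term, via Kitada's construction) and iterates over small time intervals, without invoking the change of coordinates; your route through the transformed equation plus Gagliardo--Nirenberg works equally well.
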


In fact, we shall prove Assertions (1)(i) and (ii) under the more general assumptions on $V(x)$, see Assumption \ref{ass:pot} below. This, together with the fact that no condition on the size of $|\Omega| $ or $\{\gamma_j\}_{j=1}^d$ 
is required, generalizes the earlier results given in \cite{hai, HHL}. 

Concerning the possibility of finite time blow-up, we see that one has to distinguish between the case of axially symmetric potential and the case where this symmetry is broken. The reason will become clear 
in the proof given below. In the case of a non-axially symmetric potential we can rigorously prove the occurrence of blow-up only in under the additional restrictions $|\Omega|\le \underline \gamma$, and 
$ \sigma\ge \alpha_\Omega/d $, i.e. only for a limited range of nonlinearities. It is easily seen that in $d=3$, the set of $\sigma$'s satisfying our conditions is non-empty, provided $|\Omega|^2 < \frac{8}{9}\underline \gamma^2$. 
Also note that in the case of vanishing rotation $\lim_{|\Omega|\to 0}\alpha_\Omega = 2$, yielding the usual range of $L^2$-supercritical nonlinearities. 
At this point it is not clear if these additional restrictions are only due to the strategy of our proof, or if they indicate an actual difference in the behavior of solutions to \eqref{eq:NLS_rot}. In particular, the question whether or not 
finite time blow-up occurs in situations where $\Omega >  \underline \gamma$ is completely open so far. In terms of physics, the latter would correspond to the case where the rotation is stronger than the trap and thus one would expect 
a behavior which is similar (at least qualitatively) to the ``free'' case, i.e. without any potential.

This paper is now organized as follows: Section \ref{sec:exist} is devoted to the proof of Assertion (1) of Theorem \ref{thm:main}. To this end, we shall first prove local in-time existence for solutions to 
\eqref{eq:NLS_rot}. Also, we shall see that a naive use of the conservation 
laws for mass and energy in general leads to restrictions on $|\Omega|$ or $\{\gamma_j\}_{j=1}^d$. We shall show in a second step how to overcome this problems using a coordinate-change. Assertion (2) of our main Theorem is then proved in Section \ref{sec:blow-up} and we finally 
collect some concluding remarks in Section \ref{sec:final}.


\section{Local and global existence}
\label{sec:exist}

In this section we shall allow for more general class of potentials $V(x)$ satisfying the following assumption.

\begin{assumption}
\label{ass:pot}
The potential $V:\R^d\to\R$ is assumed to be smooth and \emph{sub-quadratic}, i.e.\ for all multi-indices $k\in\N^d$, with $|k|\ge 2$, there exists a constant $C=C(k)>0$ such that
\begin{equation}\label{eq:pot}
|\d^k V(x)|\le C\qquad\textrm{for all }x\in\R^d.
\end{equation}
\end{assumption}
\begin{remark}Clearly, a harmonic trapping potential of the form \eqref{eq:Vquadr} is sub-quadratic. Assumption \ref{ass:pot} allows us to take into account more general situations of physical interest, such as a combined 
harmonic trap plus optical lattice potential, see e.g.\ \cite{CN}.
Note however, that under Assumption \ref{ass:pot}, the potential is not necessarily bounded below (or confining). In particular we can also allow for repulsive potentials such as $V(x) = - \gamma^2 |x|^2$, 
see \cite{Car03}.\end{remark}

As a first, preliminary step, we shall prove the following local well-posedness result.
\begin{lemma}
\label{lem:locEx}
Let $\psi_0\in\Sigma$, $\omega \in \R$, and $0< \sigma  < 2/(d-2)$. Moreover, assume that $V$ satisfies Assumption~\ref{ass:pot}. 
Then there exists a time $T=T(\|\psi_0\|_\Sigma)>0$ and a unique maximal solution $\psi\in C([0,T);\Sigma)$ of equation~\eqref{eq:nls_rotat} with $\psi(0)=\psi_0$. The solution is maximal in the sense 
that, if $T<+\infty$, then
\[
 \lim_{t\rightarrow T} \|\nabla \psi(t)\|_{L^2} = + \infty.
\]
Moreover, the following 
conservation laws hold
\begin{align}\label{eq:en_cons}
M(t) = M(0), \quad E_\Omega(t)&=E_\Omega(0),
\end{align}
whereas for the angular momentum we have
\begin{equation}\label{eq:ang_mom_cons}
L_\Omega(t)+\int_0^t\int_{\R^d} i |\psi|^2 (\Omega \cdot L) V(x)dx=L_\Omega(0).
\end{equation}
\end{lemma}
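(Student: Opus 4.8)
The plan is to prove Lemma~\ref{lem:locEx} by a standard fixed-point argument adapted to the space $\Sigma$, exploiting the fact that the linear part, including the rotation term, generates a well-behaved propagator on $\Sigma$. Specifically, let $H_\Omega := -\tfrac12\Delta + V(x) - \Omega\cdot L$ and denote by $(e^{-itH_\Omega})_{t\in\R}$ the corresponding unitary group on $L^2$. The first step is to establish that, under Assumption~\ref{ass:pot}, this group is well-defined and, more importantly, that it maps $\Sigma$ into $\Sigma$ with at most exponentially growing norm on bounded time intervals, and satisfies suitable Strichartz estimates. The operator $\Omega\cdot L$ is skew-adjoint and, crucially, first-order with linear coefficients, so $H_\Omega$ remains a perturbation of the harmonic oscillator of the same sub-quadratic type; one can invoke the theory of Schrödinger propagators with sub-quadratic potentials (in the spirit of Fujiwara, or the treatment in \cite{Car03,Car05,Car09}) to get local-in-time Strichartz estimates without loss. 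Alternatively, and this is the slicker route anticipated by the introduction, one uses the time-dependent change of coordinates (rotating frame) that the authors announce: in $d=2$, writing $\psi(t,x) = \phi(t, R(|\Omega|t)x)$ with $R(\theta)$ the rotation matrix removes the $\Omega\cdot L$ term at the cost of turning $V$ into a time-dependent sub-quadratic potential $V(t,\cdot)$, and the analogous rotation works in $d=3$. One then quotes the known local well-posedness theory in $\Sigma$ for NLS with time-dependent sub-quadratic potentials.

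The second step is the contraction mapping itself. Define the Duhamel map
\[
\Phi(\psi)(t) := e^{-itH_\Omega}\psi_0 - i\lambda \int_0^t e^{-i(t-s)H_\Omega}\bigl(|\psi(s)|^{2\sigma}\psi(s)\bigr)\dd s,
\]
and show it is a contraction on a ball in $C([0,T];\Sigma)$ (intersected with the relevant Strichartz space) for $T = T(\|\psi_0\|_\Sigma)$ small. The $H^1$ part is the textbook energy-subcritical argument; the new ingredient is controlling the weight $\|x\psi\|_{L^2}$. Here one commutes $x$ (or better, the Heisenberg-type observable that is conjugated by $e^{-itH_\Omega}$) through the equation: the commutator $[x_j, H_\Omega]$ is again first order (it produces $\partial_{x_k}$ terms from the Laplacian and from $\Omega\cdot L$), so $\|x\psi\|_{L^2}$ is controlled by $\|\nabla\psi\|_{L^2}$ plus lower-order terms via a Gronwall argument, and the nonlinearity contributes $\||x||\psi|^{2\sigma}\psi\|$-type terms that close exactly as the $H^1$ estimates do since $|x|$ commutes with multiplication. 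Uniqueness and the blow-up alternative (if $T<\infty$ then $\|\nabla\psi(t)\|_{L^2}\to\infty$) follow from the standard continuation argument: the $L^2$ norm is a priori conserved, and $\|x\psi\|_{L^2}$ cannot blow up while $\|\nabla\psi\|_{L^2}$ stays bounded, again by the commutator/Gronwall estimate, so the only obstruction to global existence is the growth of the $\dot H^1$ norm.

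The third step is to justify the conservation laws rigorously. Formally, multiplying \eqref{eq:NLS_rot} by $\conj\psi$, by $\partial_t\conj\psi$, and by $(\Omega\cdot L)\conj\psi$ respectively and integrating gives conservation of $M$, $E_\Omega$, and the angular momentum identity \eqref{eq:ang_mom_cons}; the extra term $\int i|\psi|^2(\Omega\cdot L)V\dd x$ arises precisely because $\Omega\cdot L$ does not commute with $V$ unless $V$ is axially symmetric. To make this rigorous one regularizes — either by working first with smooth data and using the propagator's smoothing/persistence of regularity (sub-quadratic potentials propagate $H^2\cap(\text{weighted})$ spaces), deriving the identities there, and then passing to the limit by the continuous dependence established in the contraction step — or by a Kato-type argument multiplying by a truncation. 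The identity for $L_\Omega$ requires knowing $(\Omega\cdot L)\psi\in L^2$, which is exactly why $\Sigma$ (finite angular momentum) is the right space; note $(\Omega\cdot L)$ maps $\Sigma$ into $L^2$ since it is a first-order operator with linear coefficients.

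The main obstacle I expect is the first step: getting clean local Strichartz (or at least energy) estimates for $e^{-itH_\Omega}$ on $\Sigma$ uniformly enough to run the fixed point, given that $H_\Omega$ contains both a sub-quadratic potential and the first-order rotation term. The cleanest resolution is the rotating-frame change of variables, which trades the rotation term for time-dependence of the (still sub-quadratic) potential and reduces everything to the established theory; the price is merely checking that the transformed potential still satisfies Assumption~\ref{ass:pot} uniformly on compact time intervals and that the transformation is a bicontinuous bijection of $\Sigma$ (which it is, being the composition of a rotation of $x$-space with a unitary multiplier). Everything downstream — the contraction, the blow-up alternative, and the conservation laws — is then routine NLS bookkeeping, with the only genuinely new feature being the defect term in \eqref{eq:ang_mom_cons}.
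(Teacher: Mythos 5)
Your proposal is correct and follows essentially the same route as the paper: a Duhamel fixed-point in $\Sigma$ using local-in-time Strichartz estimates for the full propagator $e^{-itH}$, justified by the fact that the Weyl symbol $\tfrac12|\xi|^2+V(x)-\Omega\cdot(x\wedge\xi)$ is smooth and sub-quadratic (the paper invokes Kitada where you cite Fujiwara/Carles), together with commutator estimates for $x$ and $\nabla$, a density argument for the conservation laws, and the Gronwall bound $\tfrac{d}{dt}\|x\psi\|_{L^2}^2\le\|x\psi\|_{L^2}^2+\|\nabla\psi\|_{L^2}^2$ for the blow-up alternative. The rotating-frame reduction you mention as an alternative is exactly what the paper reserves for the later global-existence argument, not for this local lemma.
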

\begin{proof}
The proof is an adaptation of classical arguments, based on a contraction mapping (via Duhamel's formula) and Strichartz estimates 
for the linear (unitary) group $U(t)=e^{i t H}$ generated by the Hamiltonian
\[
H = -\frac{1}{2} \Delta + V(x) - \Omega \cdot L.
\]
In our case, Strichartz estimates can be obtained by following the approach of Kitada \cite{kita}. All we need to do is to check the assumptions given there: 
First, we note that the Hamiltonian is a quantization of the classical phase space function $H(x,\xi) = - |\xi|^2/2 + V(x) +i \Omega \cdot (x \wedge \xi)$, which is smooth and sub-quadratic in $x$ and $\xi $, due to Assumption \ref{ass:pot}. 
We easily check that also the second assumption of \cite{kita} holds true, i.e.
\[
 \langle H(x,\xi), e^{ix\cdot\xi} \conj{\phi(x)} \hat{\varphi}(\xi) \rangle = \langle H(x,\eta), e^{-ix\cdot\eta} \varphi(x) \conj{\hat{\phi}(\eta)} \rangle,
\]
where $\langle \cdot, \cdot \rangle$ denotes the usual duality bracket between tempered distributions and Schwartz functions. This expression is equivalent to the fact that $H$ is essentially self-adjoint. 
We therefore conclude that there is a $\delta >0$ such that
$$\| U(t) \varphi \|_{L^\infty} \le  \frac{1}{|t|^{d/2}} \| \varphi \|_{L^2}, \quad \mbox{for $|t| < \delta$.}$$ 
In particular it follows that the Strichartz estimates for $H$ are analogous to those found in the well-known case of 
NLS with quadratic potentials \cite{Car05}, i.e.\ the rotation term does not influence the dispersive behavior (locally in time). The existence of a local in-time solution then follows analogously as given therein. 
The conservation laws \eqref{eq:en_cons} follow from straightforward calculations in combination with 
a standard density argument (see e.g.\ \cite{caze}).
Finally, in order to prove the blow-up alternative we first compute
\begin{equation}
 \label{eq:first_moment_deriv}
 \frac{d}{dt} \|x\psi(t)\|_{L^2}^2 = 2 \im \int_{\R^d} x \conj{\psi} (t) \nabla \psi(t) \;dx \le \|x\psi(t)\|_{L^2}^2 + \|\nabla\psi(t)\|_{L^2}^2.
\end{equation}
Thus, as long as $\|\nabla\psi(t)\|_{L^2}$ is bounded, Gronwall's inequality yields a bound on $\|x\psi(t)\|_{L^2}$ as well. In view of mass conservation, the only obstruction to global existence is therefore given by the 
possible unboundedness of $\|\nabla\psi(t)\|_{L^2}$ in $[0,T]$. 
\end{proof}

\begin{remark}
For quadratic potentials of the form \eqref{eq:Vquadr}, Strichartz estimates can be obtained explicitly by invoking a generalization of Mehler's formula for the kernel of $U(t)$, c.f.\ \cite{Car09}. 
Indeed, by making the following ansatz 
\[
 U(t)\psi_0(x) = \prod_{j=1}^d \left(2\pi i \mu_j(t)\right)^{-1/2} \int_\R^d e^{\frac{i}{2}F(t,x,y)} \psi_0(y) \;dy
\]
where $\mu_j(t) \in \R_+$ and 
$F(t,x,y)$ is a general quadratic form in $x$ and $y$ with (yet to be determined) time-dependent coefficients. Substituting this into the linear Schr\"odinger equation yields a coupled system of differential equations for these coefficients. 
Solving this system, however, is in general rather tedious. This approach is therefore only feasible under some simplifying assumptions, such as $\Omega = 0$ \cite{Car03, Car05}, or 
$V(x)=\frac{\gamma^2}{2} |x|^2$ with $|\Omega| = \gamma$ as it is done in 
\cite{hai, HHL}.
\end{remark}

In view, of \eqref{eq:ang_mom_cons}, we immediately conclude the following important corollary.
\begin{corollary}\label{cor:tilde_en} 
If $V(x)$ is such that $(\Omega\cdot L) V = 0$, then we also have conservation of angular momentum, i.e. 
$L_\Omega(t) = L_\Omega(0)$, and in addition it holds
\begin{equation}\label{eq:tilde_en}
E_0(t)\equiv \int_{\R^d}\frac{1}{2}|\nabla\psi|^2+V(x)|\psi|^2+\frac{\lambda}{\sigma+1}|\psi|^{2\sigma+2}dx = E_0(0).
\end{equation}
\end{corollary}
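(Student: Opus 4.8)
The plan is to read off both claims directly from the conservation laws already established in Lemma~\ref{lem:locEx}, so the ``proof'' is essentially a one-line algebraic deduction. First I would invoke the angular momentum identity \eqref{eq:ang_mom_cons},
\[
L_\Omega(t) + \int_0^t \int_{\R^d} i |\psi(s,x)|^2 (\Omega\cdot L) V(x)\, dx\, ds = L_\Omega(0),
\]
and simply observe that under the hypothesis $(\Omega\cdot L) V \equiv 0$ the double integral vanishes identically. Hence $L_\Omega(t) = L_\Omega(0)$ for every $t$ in the maximal existence interval, which is the asserted conservation of angular momentum.

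Next I would combine this with energy conservation. Writing the energy \eqref{eq:en} as
\[
E_\Omega(t) = E_0(t) - L_\Omega(t), \qquad E_0(t) := \int_{\R^d} \tfrac{1}{2}|\nabla\psi|^2 + V(x)|\psi|^2 + \tfrac{\lambda}{\sigma+1}|\psi|^{2\sigma+2}\, dx,
\]
and using $E_\Omega(t) = E_\Omega(0)$ from \eqref{eq:en_cons} together with the conservation of $L_\Omega$ just obtained, one gets
\[
E_0(t) = E_\Omega(t) + L_\Omega(t) = E_\Omega(0) + L_\Omega(0) = E_0(0),
\]
which is precisely \eqref{eq:tilde_en}.

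Since every step is an immediate consequence of Lemma~\ref{lem:locEx}, there is no real obstacle here; the only point deserving (minor) care is that, as usual, these identities should first be verified for smooth, rapidly decaying solutions and then extended to arbitrary $\psi_0 \in \Sigma$ by the same density/approximation argument already invoked for \eqref{eq:en_cons} and \eqref{eq:ang_mom_cons}.
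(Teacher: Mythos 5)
Your proposal is correct and is exactly the argument the paper intends: the hypothesis $(\Omega\cdot L)V=0$ kills the integral term in \eqref{eq:ang_mom_cons}, giving $L_\Omega(t)=L_\Omega(0)$, and then $E_0=E_\Omega+L_\Omega$ is conserved since both summands are. The paper itself presents the corollary as an immediate consequence of Lemma~\ref{lem:locEx} in precisely this way.
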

Thus, in the  case of axially symmetric potentials $V(x)$, there are in fact two conserved energy functionals corresponding to \eqref{eq:NLS_rot}. 
\medskip

With a local existence result in hand, we can ask about global existence. In order to infer $T=+\infty$, one usually invokes the conservation of mass and energy \eqref{eq:en_cons}. 
The problem is, that due to the appearance of the angular momentum rotation term, the energy $E_\Omega(t)$ has no definite sign 
even if $V\ge0$ and $\lambda \ge 0$ (defocusing nonlinearity). 
A possible strategy to overcome this problem is to rewrite the linear Hamiltonian as
\begin{equation}
\label{eq:magn_op}
H=-\frac{1}{2}\Delta - \Omega \cdot L + V(x) = \frac{1}{2} (-i \nabla - A(x))^2 + V(x) - \frac{|\Omega|^2}{2}r^2,
\end{equation}
where $A (x)= \Omega\wedge x$ and $r= |x \wedge \Omega| / |\Omega|$ denotes the radial distance perpenticular to $\Omega$. Note that 
$A(x)$ can be considered as the vector potential corresponding to a constant magnetic field $B = \nabla \wedge A = 2\Omega$. 
The corresponding ``magnetic derivative'' $D_A:= - i (\nabla +A(x))$ 
is known to satisfy, cf. \cite[Chapter 7]{caze}:
\[
\| \nabla |\psi| \|_{L^2} \le \|D_A \psi\|_{L^2} \le \| \nabla \psi \|_{L^2} + \|x \psi\|_{L^2}.
\]
It can therefore be used to control the nonlinear potential energy $\propto \| \psi \|_{L^{2\sigma +2}}$ 
via Gagliardo-Nirenberg type inequalities. If in addition, $V(x)$ is given by \eqref{eq:Vquadr} with $|\Omega| \le \underline \gamma$ we infer that 
$ V(x) - \frac{|\Omega|^2}{2}r^2 \ge 0$. In this case, the linear part of the energy is seen to be a sum of non-negative terms, and global existence can be concluded 
as in the case of NLS with quadratic confinement \cite{Car05}.
However, it seems impossible to extend this approach to situations in which $|\Omega| > \gamma$, even if $\lambda >0$. 
In order to do so, we shall follow a different idea, which invokes a time-dependent change of coordinates.

\begin{proof}[Proof of Assertion (1) of Theorem~\ref{thm:main}]

We start with the $L^2$-subcritical case, i.e. $0< \sigma < 2/d$ which follows by standard arguments. Namely, we write the nonlinear Schr\"odinger equation as the solution of a fixed point equation, using Duhamel's formula:
\begin{align*}
\psi(t) &= U(t)\psi_0 - i \lambda \int_0^t U(t-s)\left(|\psi|^{2\sigma}(s)\psi(s)\right) \;ds\\ &=:\Phi(\psi)(t).
\end{align*}
Next, we calculate the commutators $[\nabla, H]$ and $[x, H]$. Explicitly, we find
\begin{align*}
 [\nabla, H] =& -\frac{1}{2}[\nabla, \Delta] + [\nabla, V] + i[\nabla,\Omega\cdot(x\wedge\nabla)]\\
=& \, \nabla V + i [\nabla, x\cdot(\nabla\wedge\Omega)]\\
= & \, \nabla V + i \nabla\wedge\Omega
\end{align*}
by the well-known formula $a\cdot(b\wedge c) = \det(a,b,c) = (a\wedge b)\cdot c$ for three-dimensional vectors $a,b,c$. Similar calculations yield
\[
 [x, H] = \nabla - i \Omega\wedge x
\]
From this we can deduce
\begin{align*}
\nabla \Phi(\psi)(t) = & \ U(t) \nabla \psi_0 - i \lambda \int_0^t U(t-\tau) \nabla \left(|\psi(\tau)|^{2\sigma}\psi(\tau)\right) \;d\tau  \\ & - i \int_0^t U(t-\tau)\left( \nabla V - i \Omega \wedge \nabla \right) \Phi(\psi)(\tau) \;d\tau,
\end{align*}
and
\begin{align*}
x \Phi(\psi)(t) = & \ U(t) \nabla \psi_0 - i \lambda \int_0^t U(t-\tau) \nabla \left(|\psi(\tau)|^{2\sigma}\psi(\tau)\right) \;d\tau \\ &  - i \int_0^t U(t-\tau)\left( \nabla - i \Omega\wedge x \right) \Phi(\psi)(\tau) \;d\tau.
\end{align*}
Using Strichartz estimates (see the discussion in the proof of Lemma~\ref{lem:locEx}) we have that
\begin{equation*}
\begin{split}
\|\psi\|_{L^p(0,T;L^q)\cap L^\infty(0,T;L^2)} \le C \|\psi_0\|_{L^2} + C \|\psi\|_{L^k(0,T;L^q)}^{2\sigma}\,,
\end{split}
\end{equation*}
where
\[
q = 2\sigma+2, \quad p = \frac{4\sigma+4}{d\sigma}, \quad k = \frac{2\sigma(2\sigma+2)}{2-(d-2)\sigma}.
\]
If $\sigma<2/d$ it holds that $1/p < 1/k$ and thus
\[
\|\psi\|_{L^k(0,T;L^q)} \le T^{1/k-1/p} \|\psi\|_{L^p(0,T;L^q)}.
\]
If we choose $T=T^*>0$ small enough, 
we can absorb the last term on the right hand side in order to get a bound on $\|\psi\|_{L^p(0,T^*;L^q)\cap L^\infty(0,T^*;L^2)}$. Since we can shift the time interval $[0,T^*]$ by an 
arbitrary amount of time, in the same way we can get a uniform bound on $\|\psi\|_{L^p(I;L^q)\cap L^\infty(I;L^2)}$ for every interval of length $|I|\le T^*$.
Thus, by splitting any arbitrarily large time interval $[0,T]$ into sufficiently small sub-intervals $\{ I_n \}_{n =1}^N$ such that $|I_n|\le T^*$ and iterating the bound 
$\|\psi\|_{L^p(I;L^q)\cap L^\infty(I;L^2)}\le C^*$, we infer  $\|\psi\|_{L^p(0,T;L^q)\cap L^\infty(0,T;L^2)}\le C$ where $C<+\infty$ depends on the value of $T$. 
Moreover, since $\psi_0\in\Sigma$ we also get
\begin{align*}
& \|x\psi\|_{L^p(0,T;L^q)\cap L^\infty(0,T;L^2)} + \|\nabla\psi\|_{L^p(0,T;L^q)\cap L^\infty(0,T;L^2)}  \\
&\le  C \|\psi_0\|_{\Sigma}   + C \|\psi\|_{L^k(0,T;L^q)}^{2\sigma} \left(\|x\psi\|_{L^p(0,T;L^q)} + \|\nabla\psi\|_{L^p(0,T;L^q)}\right) + \\
& + C T \left( \|x\psi\|_{L^\infty(0,T;L^2)} + \|\nabla\psi\|_{L^\infty(0,T;L^2)} \right)
\end{align*}
From here, we proceed as before to obtain a uniform bound for the left hand side for small $T^*$ and thus by iteration for arbitrary time intervals $[0,T]$.

Next we consider the case of an $L^2$-supercritical nonlinearity $\sigma> 2/d$. In this case, the iterative argument given above breaks down. 
The basic idea is to use a change of coordinates in order to bring equation (\ref{eq:NLS_rot}) into 
a more suitable form. For the sake of notation we shall only consider the case $d=3$ in the following.
We first note that by using the skew-symmetric matrix
\[
 \Theta :=
\begin{pmatrix}
  0 & \Omega_3 & -\Omega_2 \\
  -\Omega_3 & 0 & \Omega_1 \\
  \Omega_2 & -\Omega_1 & 0
\end{pmatrix},
\]
the wedge product with the angular momentum can be written as
\[
 \Omega \wedge x = -\Theta\cdot x.
\]
Then the matrix-exponential
\[
 X(t,x):={\rm e}^{\Theta t}\cdot x
\]
defines a rotation of the vector $x\in\R^3$ around the axis $\Omega$ by an angle of $-|\Omega|t$. Its time-derivative can be calculated as
\begin{equation}
 \label{eq:partial_rot}
 \partial_t X(t,x) = \Theta \cdot X(t,x) = -\Omega \wedge X(t,x).
\end{equation}
Denoting the wave function in rotated coordinates via
\[
 \widetilde{\psi}(t,x) = \psi(t, X(t,x)),
\]
we conclude from \eqref{eq:partial_rot} that
\[
 i\partial_t \widetilde{\psi}(t,x) = i\partial_t \psi(t,X(t,x)) - i\left(\Omega\wedge X(t,x)\right) \cdot \nabla \psi(t, X(t,x)).
\]
Rewriting $-i(\Omega\wedge X)\cdot \nabla = -i\Omega\cdot(X\wedge\nabla) = \Omega\cdot L$, we arrive at 
\[
 i\partial_t \widetilde{\psi}= -\frac{1}{2}\Delta\widetilde{\psi}+ \lambda |\widetilde{\psi}|^{2\sigma}\widetilde{\psi} + V(X(t,x))\widetilde{\psi}.
\]
where we have also used the fact that the Laplace operator is invariant with respect to rotations, i.e.
\[
 \Delta_{X} \psi(t,X(t,x)) = \Delta_x \psi(t,X(t,x)).
\]
Dropping all the tildes and denoting $W(t,x) = V(X(t,x))$, we conclude that up to a change of coordinates, equation~\eqref{eq:NLS_rot} is equivalent to the following NLS with time-dependent potential
\begin{equation}\label{eq:NLS_time_dept_pot}
i \partial_t \psi = -\frac{1}{2} \Delta \psi + \lambda |\psi|^{2\sigma} \psi + W(t,x) \psi
\end{equation}
Note that $W(t,x)$ is smooth w.r.t. $t\in \R$ and sub-quadratic w.r.t. $x\in \R^3$ with the same (uniform) constants $C(k)$ as given in Assumption~\ref{ass:pot} for $V(x)$.
Moreover, if $V(x)$ is axially symmetric, i.e. $(\Omega \cdot L) V (x) = 0$, equation \eqref{eq:partial_rot} implies that 
\[
 \partial_t W(t,x) = - \Omega\wedge X(t,x)\cdot\nabla V(X(t,x)) = -i(\Omega \cdot L) V(X(t,x))  = 0,
\]
and hence $W(t,x)= W(0,x) \equiv V(x)$. The energy corresponding to the transformed NLS \eqref{eq:NLS_time_dept_pot} is given by 
\[
E_W(t) := \int \frac{1}{2} |\nabla \psi(t,x)|^2 + \lambda |\psi(t,x)|^{2\sigma+2} + W(t,x)|\psi(t,x)|^2 \;dx.
\]
However, since the potential $W(t,x)$ in general is time-dependent, the $E_W(t)$ is no longer a conserved quantity. 
Rather, we obtain that
\begin{equation}
 \label{eq:energy_deriv}
\frac{d}{dt} E_W(t) = \int \partial_t W(t,x) |\psi(t,x)|^2 \;dx.
\end{equation}
Nevertheless it is not hard to prove Assertion (1)(ii) of Theorem~\ref{thm:main}: 
In view of the blow-up alternative, stated in Lemma \ref{lem:locEx}, it suffices to show $\| \nabla\psi(t) \|_{L^2} < + \infty$, for all $T>0$. To this end, we first estimate 
\[
 \frac{1}{2}\|\nabla\psi(t)\|_{L^2}^2 \le E_W(t) + \left|\int W(t,x) |\psi(t,x)|^2 \;dx\right| \le E_W(t) + C \|x\psi(t)\|_{L^2}^2,
\]
under the assumption that $\lambda >0$. 
Integrating equation~\eqref{eq:energy_deriv} and having in mind that $W(t,x)$ is sub-quadratic in $x$, we obtain that
\begin{align}\label{eq:inequ}
\|\nabla\psi(t)\|_{L^2}^2 \le& \, E_W(0) + \int_0^t \frac{d}{ds}{E_W}(s) \;ds + C \|x\psi(t)\|_{L^2}^2\\
\le& \, C_0 \left( 1 + \|x\psi(t)\|_{L^2}^2 + \int_0^t \|x\psi(s)\|_{L^2}^2 \;ds \right).\notag
\end{align}
Recalling inequality \eqref{eq:first_moment_deriv}, we infer
\[
 \frac{d}{dt} \|x\psi(t)\|_{L^2}^2 + \|x\psi(t)\|_{L^2}^2 \le C_0 \left( 1 + \|x\psi(t)\|_{L^2}^2 + \int_0^t \|x\psi(s)\|_{L^2}^2 \;ds \right)
\]
which by Gronwall's inequality yields an uniform bound on $\|x\psi(t)\|_{L^2}$ for every time interval $[0,T]$.  With this in hand, we can bound $\|\nabla\psi(t)\|_{L^2}$ by simply using inequality~\eqref{eq:inequ} once more.
\end{proof}

\begin{remark}
In particular, for $\Omega = (0,0,|\Omega| )^\top$ and $V(x)$ given by \eqref{eq:Vquadr}, we explicitly find 
\begin{align*}
W(t,x) = \frac{1}{2}\Big (\left( \gamma_1^2 \cos^2(|\Omega| t) + \gamma_2^2 \sin^2(|\Omega| t) \right) x_1^2 + \left( \gamma_1^2 \sin^2(|\Omega| t) + \gamma_2^2 \cos^2(|\Omega| t) \right) x_2^2 \\+ \sin(2 |\Omega| t) \left( \gamma_1^2 - \gamma_2^2 \right) x_1 x_2 + \gamma_3^2 x_3^2\Big).
\end{align*}
Clearly, $W= \frac{1}{2} (\gamma_1^2 x_1^2+ \gamma_2^2 x_2^2+\gamma_3^2 x_3^2)$ in the axially symmetric case $\gamma_1^2 = \gamma_2^2$.
\end{remark}

\section{Finite time blow-up}\label{sec:blow-up}

This section is devoted to the proof of assertion (2) of Theorem~\ref{thm:main}. It follows from the following lemma.
\begin{lemma}\label{prop:blowup}
Let  $\lambda <0 $, $ \sigma < 2/(d-2)$, $\Omega \in \R^d$, for $d=2,3$, and $V(x)$ be a quadratic potential of the form \eqref{eq:Vquadr}. Denote $\underline \gamma =\min \{ \gamma_j \} _{j=1}^d$ and let $\alpha_\Omega$ be as in \eqref{def:alph}. If either
\begin{itemize}
\item[(i)] $(\Omega\cdot L)V=0$, $\sigma \ge 2/d$, and $E_0(0) <0$, or
\item[(ii)] $(\Omega\cdot L)V\neq0$, $|\Omega|\le \underline \gamma$, $ \sigma\ge \alpha_\Omega/d $, and $E_\Omega(0)<0$, 
\end{itemize}
then the corresponding solution necessarily blows up in finite time.
\end{lemma}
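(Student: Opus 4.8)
The proof is based on a virial argument for the second moment $J(t):=\|x\psi(t)\|_{L^2}^2$. By Lemma~\ref{lem:locEx} the solution belongs to $C([0,T);\Sigma)$ on its maximal interval of existence, so $J(t)<+\infty$ there; a standard regularization argument moreover shows that $J\in C^2([0,T))$ with $J'(t)=2\im\int_{\R^d}\overline{\psi}\,x\cdot\nabla\psi\,\dd x$, as already used in \eqref{eq:first_moment_deriv}. To compute $J''$ the crucial observation is that the rotation term $\Omega\cdot L=-i(\Omega\wedge x)\cdot\nabla$, being the generator of rotations about the axis $\Omega$, commutes both with multiplication by $|x|^2$ and with the dilation field $x\cdot\nabla$; hence it contributes nothing to $J''$. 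One therefore obtains exactly the same identity as for an NLS with a smooth potential and no rotation, namely
\[
J''(t)=2\|\nabla\psi(t)\|_{L^2}^2+\frac{2d\sigma\lambda}{\sigma+1}\|\psi(t)\|_{L^{2\sigma+2}}^{2\sigma+2}-2\int_{\R^d}(x\cdot\nabla V)\,|\psi(t)|^2\,\dd x,
\]
and, since $V$ in \eqref{eq:Vquadr} is homogeneous of degree two, $x\cdot\nabla V=2V\ge0$.

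In case (i) the potential is axially symmetric, so by Corollary~\ref{cor:tilde_en} the energy $E_0$ from \eqref{eq:tilde_en} is conserved. Eliminating the nonlinear term via $\frac{\lambda}{\sigma+1}\|\psi\|_{L^{2\sigma+2}}^{2\sigma+2}=E_0(0)-\frac12\|\nabla\psi\|_{L^2}^2-\int_{\R^d}V|\psi|^2\,\dd x$ gives
\[
J''(t)=(2-d\sigma)\|\nabla\psi(t)\|_{L^2}^2+2d\sigma E_0(0)-2(d\sigma+2)\int_{\R^d}V|\psi(t)|^2\,\dd x\le 2d\sigma E_0(0)<0,
\]
using $\sigma\ge2/d$, $V\ge0$ and $E_0(0)<0$. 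Integrating twice, if $T=+\infty$ then $J(t)\le J(0)+J'(0)t+d\sigma E_0(0)t^2$ would eventually become negative, contradicting $J\ge0$; hence $T<+\infty$, and the blow-up alternative in Lemma~\ref{lem:locEx} yields $\|\nabla\psi(t)\|_{L^2}\to+\infty$ as $t\to T$.

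In case (ii) the energy $E_0$ need not be conserved, so I eliminate the nonlinear term instead by means of the conserved energy $E_\Omega$ from \eqref{eq:en}. This is where the rotation re-enters the virial identity: one picks up the extra, sign-indefinite contribution $2d\sigma L_\Omega(t)=2d\sigma\int_{\R^d}\overline{\psi}(\Omega\cdot L)\psi\,\dd x$, which is controlled by the pointwise estimate $|L_\Omega(t)|\le|\Omega|\,\|x\psi(t)\|_{L^2}\|\nabla\psi(t)\|_{L^2}$. Together with the confinement bound $V\ge\frac12\underline{\gamma}^2|x|^2$, this leads to
\[
J''(t)\le 2d\sigma E_\Omega(0)+(2-d\sigma)\|\nabla\psi\|_{L^2}^2+2d\sigma|\Omega|\,\|x\psi\|_{L^2}\|\nabla\psi\|_{L^2}-(d\sigma+2)\underline{\gamma}^2\,\|x\psi\|_{L^2}^2.
\]
The main obstacle, and the only genuine departure from the classical Glassey argument, is now to ensure that the quadratic form in $\big(\|\nabla\psi\|_{L^2},\|x\psi\|_{L^2}\big)$ on the right-hand side is non-positive. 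For $\sigma\ge 2/d$ and $|\Omega|\le\underline{\gamma}$ its diagonal coefficients $2-d\sigma$ and $-(d\sigma+2)\underline{\gamma}^2$ are $\le0$, so the form is negative semi-definite precisely when its discriminant is non-positive, $(2d\sigma|\Omega|)^2\le 4(d\sigma-2)(d\sigma+2)\,\underline{\gamma}^2$; this rearranges to $(d\sigma)^2(\underline{\gamma}^2-|\Omega|^2)\ge 4\underline{\gamma}^2$, i.e.\ to exactly the hypothesis $\sigma\ge\alpha_\Omega/d$ with $\alpha_\Omega$ defined in \eqref{def:alph}. Under this condition $J''(t)\le 2d\sigma E_\Omega(0)<0$, and one concludes as in case (i).
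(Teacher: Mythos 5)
Your proof is correct and follows essentially the same strategy as the paper: the Glassey virial argument with the observation that the rotation term drops out of the virial identity, conservation of $E_0$ in the axially symmetric case, and in case (ii) the bound $|L_\Omega|\le|\Omega|\,\|x\psi\|_{L^2}\|\nabla\psi\|_{L^2}$ combined with conservation of $E_\Omega$ and $V\ge\tfrac12\underline\gamma^2|x|^2$. The only (minor, presentational) differences are that you justify the virial identity via commutators rather than the paper's hydrodynamic computation, and in case (ii) you eliminate the nonlinearity exactly using $E_\Omega$ and impose non-positivity of the resulting quadratic form through its discriminant, whereas the paper adds and subtracts $\alpha L_\Omega$ and tunes the parameters $\alpha$ and $\theta$ --- both routes yield the identical threshold $\sigma\ge\alpha_\Omega/d$.
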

Note that the condition for the energy of the initial data $\psi_0$ are not identical in both cases. The reason will become clear in the proof given below. 

\begin{proof} To simplify the arguments later on, let us first compute the conservation laws for the mass and momentum densities, i.e. $\rho:=|\psi|^2$ and $J:=\im(\conj{\psi}\nabla\psi)$. Indeed a straightforward calculation yields
\begin{equation}\label{eq:mass}
\d_t\rho+\diver J=i\Omega \cdot L \rho.
\end{equation}
On the other hand, for the current density $J$ we find
\begin{equation}\label{id:current}
\begin{aligned}
\d_t\pare{\im(\conj{\psi}\nabla\psi)}=&\ \im\pare{\pare{-\frac{i}{2}\Delta\conj{\psi}+iV(x)\conj{\psi}+i\lambda|\psi|^2\psi
+i\Omega\cdot L\conj{\psi}}\nabla\psi}\\
&+\im\pare{\conj{\psi}\nabla\pare{\frac{i}{2}\Delta\psi-iV(x)\psi-i\lambda|\psi|^{p-1}\psi+i(\Omega\cdot L\psi)}}
\end{aligned}
\end{equation}
Next, we calculate
\begin{equation*}
\im\pare{\conj{\psi}\nabla(i\Omega\cdot L\psi)}= \im\pare{\conj{\psi}(i\Omega\cdot L)\nabla\psi}-\Omega\wedge J.
\end{equation*}
Thus we can combine the two terms in \eqref{id:current} which stem from the rotation via
\[
\im\pare{(i\Omega\cdot L)\conj{\psi}\nabla\psi}+\im\pare{\conj{\psi}(i\Omega\cdot L)\nabla\psi}-\Omega\wedge J=(i\Omega\cdot L)J-\Omega\wedge J
\]
where we have used that $i\Omega\cdot L$ is real-valued. The other terms in \eqref{id:current} are usual in quantum hydrodynamics, see e.g.\ \cite{paolo}, yielding the following equation for $J$:
\begin{equation}\label{eq:current}
\d_tJ+\diver\pare{\re(\nabla\conj{\psi}\otimes\nabla\psi)}+\frac{\lambda \sigma}{\sigma+1}\nabla|\psi|^{2\sigma+2}+\rho\nabla V=\frac{1}{4}\Delta\nabla\rho
+(i\Omega\cdot L)J-\Omega\wedge J.
\end{equation}

The proof of finite time bow-up now follows by the classical argument of Glassey (see \cite{caze}). To this end, we consider the time evolution of 
\begin{equation*}
I(t):=\frac{1}{2}\int_{\R^d}|x|^2|\psi(t, x)|^2 \, dx.
\end{equation*}
Differentiating with respect to time and using \eqref{eq:mass}, we obtain 
\begin{equation*}
\frac{d}{dt}I(t)=\int_{\R^d} x\cdot J(t, x)\, dx+\int_{\R^d}\frac{|x|^2}{2}(i\Omega\cdot L)\rho(t, x) \, dx.
\end{equation*}
Integrating by parts and using $(\Omega\cdot L) |x|^2 =0$ shows that the second integral in fact vanishes, i.e. we have
\begin{equation*}
\frac{d}{dt}I(t)=\int_{\R^d} x\cdot J\;dx.
\end{equation*}
Differentiating in time once more and using \eqref{eq:current}, we obtain
\begin{align*}
\frac{d}{dt}\int x\cdot J\, dx=&\int x\cdot\bigg(-\diver\pare{\re(\nabla\conj{\psi}\otimes\nabla\psi)}
-\lambda\frac{\sigma}{\sigma+1}\nabla|\psi|^{2\sigma+2}-\rho\nabla V+\frac{1}{4}\Delta\nabla\rho\notag\\
&+(i\Omega\cdot L)J-\Omega\wedge J\bigg)dx,
\end{align*}
which we rewrite as
\begin{equation}
\begin{aligned}
\frac{d}{dt}\int x\cdot J\, dx=&\int|\nabla\psi|^2+\lambda\frac{d\sigma}{\sigma+1}|\psi|^{2\sigma+2}-\rho x\cdot\nabla V+x\cdot(i\Omega\cdot L)J\\
&-x\cdot\Omega\wedge J \;dx.\label{eq:45}
\end{aligned}
\end{equation}
Now we first note that for any potential $V(x)$ of the form \eqref{eq:Vquadr} we have $x\cdot\nabla V=2V$. Moreover, we compute
\begin{align*}
 \int_{\R^d} x\cdot (i\Omega\cdot L J) \; dx =& - \int_{\R^d} \left(\Omega\cdot L x \right) \cdot J \;dx = - \int_{\R^d} \left(\Omega\cdot(x\wedge\nabla) x \right) \cdot J \;dx\\
=& - \int_{\R^d} \left((\Omega\wedge x)\cdot\nabla\right) x \cdot J \;dx = - \int_{\R^d} (\Omega\wedge x)\cdot J \;dx, 
\end{align*}
shows that the last two terms in \eqref{eq:45} cancel each other. 
In summary we arrive at the following identity
\begin{equation} \label{eq:virial}
\frac{d^2}{dt^2}I(t)=\int|\nabla\psi|^2+\lambda\frac{d\sigma}{\sigma+1}|\psi|^{2\sigma+2}-2V|\psi|^2 \, dx,
\end{equation}
which is in fact exactly the same as in the case of NLS without rotation, c.f.\ \cite{caze}.

We can now prove assertion (i): Recall from Corollary \ref{cor:tilde_en} that if the potential $V(x)$ is axially symmetric, then $E_0(t)=E_0(0)$, with $E_0$ defined in \eqref{eq:tilde_en}. Hence from \eqref{eq:virial} and $V\geq 0$ we can write 
\begin{equation*}
\frac{d^2}{dt^2}I(t)\le2E_0+\lambda\frac{d\sigma-2}{\sigma+1}\int_{\R^d}|\psi|^{2\sigma+2}dx
\end{equation*}
Assuming $E_0<0$, $\lambda <0$, and $\sigma\ge2/d$, we consequently obtain
\begin{equation*}
\frac{d^2}{dt^2}I(t)<-C,
\end{equation*}
for some constant $C > 0$. Integrating this relation twice, we obtain
\begin{equation*}
I(t)<-\frac{C}{2}t^2+c_1t+c_2
\end{equation*}
with some integration constants $c_1$ and $c_2$. Thus, if the solution $\psi(t)\in \Sigma$ were to exist for all times, there would be a time $T^\ast < +\infty$, such that $I(T^\ast) < 0$. 
This however is in contradiction with the fact that, by definition, $I(t) \ge 0$ for all $t\in \R$ and hence the assertion is proved.

In order to prove assertion (ii) we again consider \eqref{eq:virial}: The problem is that in the case of a non-axially symmetric potential ($\Omega \cdot LV(x) \not =0$), 
the energy $E_0$ is no longer conserved. Rather we only have the conservation law for $E_\Omega(t) = E_\Omega(0)$. In order to use this 
piece of information, we first add and subtract to \eqref{eq:virial} a multiple of the angular momentum $L_\Omega(t)$, i.e.
\begin{equation*}
\frac{d^2}{dt^2}I(t)=\int_{\R^d}|\nabla\psi|^2+\frac{\lambda\sigma d}{\sigma+1}|\psi|^{2\sigma+2}-2V|\psi|^2-\alpha\conj{\psi}\Omega\cdot L\psi \, dx
+\int\alpha\conj{\psi}\Omega\cdot L\psi \, dx,
\end{equation*}
where $\alpha>0$ is a parameter to be chosen later on. 
Using Cauchy-Schwarz and Young's inequality, the last term on the r.h.s. can be bounded by
\begin{equation*}
\alpha\int_{\R^d}\conj{\psi}\Omega\cdot L\psi dx\le \alpha |\Omega|\|\nabla\psi\|_{L^2}\|x\psi\|_{L^2}
\le\frac{\alpha\theta}{2}\|\nabla\psi\|^2_{L^2}+\frac{\alpha |\Omega|^2}{2\theta}\|x\psi\|^2_{L^2},
\end{equation*}
where $\theta>0$ is another free parameter to be chosen later on. We consequently estimate
\begin{align*}
\frac{d^2}{dt^2}I(t)\le & \, \int_{\R^d}\pare{1+\frac{\alpha\theta}{2}}|\nabla\psi|^2+\frac{\lambda\sigma d}{\sigma+1}|\psi|^{2\sigma+2}
+\pare{-2V+\frac{\alpha |\Omega|^2}{2\theta}|x|^2}|\psi|^2 \, dx \\ & \,  - \int_{\R^d} \alpha\conj{\psi}\Omega\cdot L\psi \, dx.
\end{align*}
Now, we choose $\theta$ such that $2(1+\frac{\alpha\theta}{2})=\alpha$, that is $\theta=\frac{\alpha-2}{\alpha}$. In this way we have
\begin{multline*}
\frac{d^2}{dt^2}I(t)\le\int_{\R^d}\alpha\pare{\frac{1}{2}|\nabla\psi|^2+\lambda\frac{1}{\sigma+1}|\psi|^{2\sigma+2}+V|\psi|^2-\conj{\psi}\Omega\cdot L\psi}dx\\
+\int_{\R^d}\lambda\frac{\sigma d-\alpha}{\sigma+1}|\psi|^{2\sigma+2}dx+\int_{\R^d}\pare{-(\alpha+2)V+\frac{\alpha^2|\Omega|^2}{2(\alpha-2)}|x|^2}|\psi|^2dx.
\end{multline*}
Let $\underline \gamma:=\min(\gamma_1, \gamma_2, \gamma_3)$, and choose $\alpha$ such that
\begin{equation*}
\frac{(\alpha+2)}{2} \underline \gamma^2=\alpha^2\frac{|\Omega|^2}{2(\alpha-2)}.
\end{equation*}
This yields $\alpha = \alpha_\Omega$ with $$\alpha_\Omega=\sqrt{\frac{4\underline{\gamma}^2}{\underline{\gamma}^2-|\Omega|^2}} .$$ 
By doing so, the last term in the previous inequality is seen to be non-positive and furthermore we conclude that, for $\lambda <0$ and $\sigma\ge\frac{\alpha_\Omega}{d}$, it holds:
\begin{equation}
\frac{d^2}{dt^2}I(t)\le \alpha_\Omega E_\Omega(t) \equiv  \alpha_\Omega E_\Omega(0).
\end{equation}
Thus, if the initial energy $E_\Omega(0)<0$ the second derivative of $I(t)$ is again negative and we can argue (by contradiction) as before.
\end{proof}

\section{Concluding remarks}
\label{sec:final}

As we have seen above, equation \eqref{eq:NLS_rot} can be considered (upon a change of coordinates) as a special case of NLS with time-dependent potentials (sub-quadratic in $x$). 
This class of models has recently been studied in \cite{Car09}. Following the arguments given therein, 
one could infer global in-time existence of \eqref{eq:NLS_rot} 
for \emph{sufficiently small} initial data $ \psi_0 \in \Sigma$, regardless of the sign of the nonlinearity. 
Moreover, growth rates for higher order (weighted) Sobolev norms can also be obtained as in \cite{Car09}.
In addition, we note 
that for a \emph{repulsive}, isotropic quadratic potential $V(x)=-\frac{\gamma^2}{2} |x|^2$, 
the time-dependent change of coordinates is trivial and we could henceforth conclude global in-time existence for sufficiently large $\gamma>0$ by following 
the arguments given in \cite{Car03}. 

We also want to point out that for the usual NLS with $\sigma = 2/d$ there is an extra symmetry which has been successfully deployed in the study of blow-up (yielding explicit blow-up solutions and blow-up rates), see e.g. \cite{Ra}. 
Using the so-called \emph{Lens transform} \cite{KaWe} one can transfer (most of) these results to the case of 
NLS with isotropic time dependent quadratic potential $W(t,x) = \gamma(t) |x|^2$, see \cite{Car09}. However, it is argued in \cite{Car09} that such an approach is only feasible in the case of isotropic potentials and thus, 
we cannot expect from it any further insight on the possibility of blow-up in our case, when $(L\cdot \Omega) V(x)\not =0$ and $|\Omega|> \underline \gamma$.

Finally, it is worth noting that the effect of the angular momentum rotation term in our model is very different from other situations. For example, 
it has been shown for the Euler equations with Coriolis force that blow-up can be delayed through a sufficiently strong rotation term \cite{liuTad} (see also as \cite{kdvRot} for a related result). 
Clearly, the situation in our model is much more involved, and we can not expect an analogous result to be true (the counterexample being the case where $V(x)$ is axially symmetric).

\end{document}